\documentclass[letterpaper, 10 pt, conference]{ieeeconf}
\IEEEoverridecommandlockouts                              
\usepackage{cite}
 \usepackage[inkscapearea=page]{svg}
\usepackage{graphicx}      
\usepackage{amsmath}
\usepackage{amssymb}
\usepackage{subcaption}

\usepackage{amsthm}
\usepackage{mathrsfs}
\usepackage{empheq}
\usepackage{tikz}
\usepackage[utf8]{inputenc}
\usepackage{pgfplots} 
\usepackage{pgfgantt}
\usepackage{pdflscape}
\pgfplotsset{compat=newest} 
\pgfplotsset{plot coordinates/math parser=false}
\usepackage{tikzscale}
\usepackage{standalone}
\usetikzlibrary{shapes,arrows}
\usetikzlibrary{arrows,calc,positioning,fit}
\usepackage{amsmath}
\let\labelindent\relax
\usepackage{enumitem} 

\usepackage{stfloats}

\newtheorem{theorem}{Theorem}
\newtheorem{definition}{Definition}

\newtheorem{remark}{Remark}
\newtheorem{lemma}{Lemma}
\newtheorem{corollary}{Corollary}

\def\BibTeX{{\rm B\kern-.05em{\sc i\kern-.025em b}\kern-.08em
    T\kern-.1667em\lower.7ex\hbox{E}\kern-.125emX}}

\usepackage{eso-pic}
\AddToShipoutPictureBG*{%
\AtPageUpperLeft{%
\setlength\unitlength{1in}%
\hspace*{\dimexpr0.5\paperwidth\relax}
\makebox(0,-1.2)[c]{
\begin{tabular}{c c}
T. de Groot \emph{et al.},
Scaled Relative Graphs and Dynamic Integral Quadratic Constraints: \\Connections and Computations for Nonlinear Systems, \\
uploaded to arXiv  02 April 2026 \\
\end{tabular}}}}

\begin{document}

\title{\LARGE \bf
Scaled Relative Graphs and Dynamic Integral Quadratic Constraints: Connections and Computations for Nonlinear Systems
}
\author{Timo de Groot, Tom Oomen, W.P.M.H. Heemels, Sebastiaan van den Eijnden 
\thanks{The authors are with the Eindhoven University of Technology, Department of Mechanical Engineering, P.O. Box 513, 5600MB Eindhoven, The Netherlands (e-mail: s.j.a.m.v.d.eijnden@tue.nl).}
\thanks{Research partly supported by the European Union (ERC Advanced Grant, Proacthis, no. 10105538).}}

\maketitle

\begin{abstract}
Scaled relative graphs (SRGs) enable graphical analysis and design of nonlinear systems. In this paper, we present a systematic approach for computing both soft and hard SRGs of nonlinear systems using dynamic integral quadratic constraints (IQCs). These constraints are exploited via application of the S-procedure to compute tractable SRG overbounds. In particular, we show that the multipliers associated with the IQCs define regions in the complex plane. Soft SRG computations are formulated through frequency-domain conditions, while hard SRGs are obtained via hard factorizations of multipliers and linear matrix inequalities. The overbounds are used to derive an SRG-based feedback stability result for Lur'e-type systems, providing a new graphical interpretation of classical IQC stability results with dynamic multipliers.  
\end{abstract}

\section{Introduction}
Scaled relative graphs (SRGs) enable graphical analysis and robust design of nonlinear control systems, complementing powerful and widely adopted loop-shaping techniques for linear time-invariant (LTI) systems. Since their introduction in the optimization literature \cite{Ryu22} and the systems and control community \cite{Chaffey23}, SRGs have seen rapid developments, both in theoretical foundations and practical application \cite{Barron25, Chen, Krebbex, Groot, Eijnden24, Evans, Barron25_2, Eijnden25, Zhang2}. 

Practical use of SRGs for system analysis and control design requires efficient computation of a system's SRG. For LTI systems, several computational approaches have been reported in the literature, exploiting, for instance, links with the Nyquist diagram \cite[Section 5]{Chaffey23}, \cite{Chen25cdc}, gain computations \cite{Nauta, Krebbekx25d}, and dissipativity-based characterizations \cite{Groot}. These methods address both soft and hard SRGs, the latter which are particularly useful for studying unbounded and unstable systems, including systems with integrators. The dissipativity-based approach has been extended to soft and hard SRG computations for specific classes of nonlinear systems exhibiting ``linear-like'' dynamics, including piecewise linear systems,  reset control systems, and Lur'e systems with memoryless, sector-bounded nonlinearities \cite{Groot, Eijnden24, Groot25cdc}. However, these methods do not directly extend to more general nonlinear systems.

In this paper, we address the open problem of computing soft and hard SRGs for a broader class of nonlinear systems by presenting a method for constructing SRG overbounds. Our approach relies on the use of (dynamic) IQCs \cite{Megretski, Carrasco_seiler} and builds on the ideas in \cite{Eijnden26} where regions in the complex plane are characterized by multipliers. Specifically, we assume that the nonlinear systems under consideration are known to satisfy one or multiple soft and/or hard IQCs. Multipliers associated with these IQCs are exploited via the S-procedure to derive additional input-output relationships for the nonlinear system, which are translated into regions in the complex plane that overbound the system's SRG. The role of the IQCs is to replace an exact description of the system dynamics with simpler characterizations that enable tractable SRG computational methods. 

The contributions of this paper are threefold: 

1) We introduce a novel method using IQCs for overbounding SRGs of generic nonlinear dynamical systems, going beyond the ``linear-like'' classes considered so far in existing work. For computing soft SRGs we present frequency-domain conditions, whereas for hard SRGs we exploit the hard factorization of multipliers into stable LTI filters \cite{Carrasco_seiler} to formulate computations based on linear matrix inequalities (LMIs). Links between the proposed soft and hard SRG computations are provided. 

2) We exploit the overbounds to formulate an SRG-based stability result for Lur'e systems, formed by the feedback interconnection of an LTI system and a nonlinearity. We show that this result provides a new graphical interpretation of the classical IQC theorem in \cite{Megretski}, allowing stability to be assessed  directly from the original Nyquist plot of the LTI part instead of modifying it with dynamic multipliers. This parallels graphical tests based on static multipliers, such as the passivity, small-gain, and conicity theorems \cite{Zames66}, as well as the (generalized) circle-criterion \cite{Krebbex}. 

3) We present specific connections with the Popov criterion and Zames-Falb multipliers \cite{Desoer09, Carrasco}, and reveal some structural limitations. Examples are provided throughout the paper to illustrate our main results.

The remainder of this paper is organized as follows. In Section~\ref{sec:SRG_IQC} we provide background on SRGs and IQCs. Section~\ref{sec:main} presents our main result on computing the hard and soft SRGs of nonlinear systems by exploiting dynamic IQCs. A feedback stability theorem is presented in Section~\ref{sec:AS}. Examples are provided throughout these sections, and conclusions are given in Section~\ref{sec:conclusions}. 

\subsection*{Notation} An $n$-by-$n$ Hermitian matrix $P$ is called positive (negative) semi-definite, denoted by $P\succeq 0$ ($P\preceq 0$), if $x^* P x \geq 0$ ($x^*Px \leq 0$) for all $x \in \mathbb{C}^n$, where $*$ denotes the complex conjugate transpose. 
The distance between two sets ${S}_1,S_2\subseteq \mathbb{C}$ is denoted $\textup{dist}({S}_1,{S}_2)=\inf_{a\in {S}_1,\;b\in{S}_2}|a-b|$. 

Denote the set of square-integrable $\mathbb{R}^n$-valued signals by
$$\mathcal{L}_2^n := \left\{u: [0,\infty)\to\mathbb{R}^n \mid \int_{0}^\infty u(t)^\top u(t) dt < \infty\right\},$$ 
and let the inner product and induced norm be {defined by}
$$\langle u,y\rangle := \int_{0}^\infty u(t)^\top y(t) dt \:\textup{ and }\:\|u\| := \sqrt{\langle u,u\rangle}.$$ 
The extended space $\mathcal{L}_{2e}^n$ is given by
\begin{equation*}
      \mathcal{L}_{2e}^n := \left\{u:[0,\infty) \to \mathbb{R}^n \mid \Gamma_T u\in \mathcal{L}_2^n \ \ \textup{for all } T \geq 0\right\},
\end{equation*}
where the truncation operator $\Gamma_T$ on a signal $u:[0,\infty)\to \mathbb{R}^n$ is defined such that $\Gamma_Tu :[0,\infty)\to \mathbb{R}^n$ with $(\Gamma_Tu)(t)=u(t)$ when $t\in [0,T]$, and $(\Gamma_Tu)(t)=0$ when $t> T$. We write $u_T = \Gamma_Tu$ for any $T\geq 0$ when there is no ambiguity. The Fourier transform of a signal $u\in\mathcal{L}_2^n$ is given by $$\hat{u}(j\omega):=\int_{0}^{\infty}u(t)e^{-j\omega t}\,dt.$$

A \emph{system} is defined as an operator $H:\mathcal{L}_{2e}^n \to \mathcal{L}_{2e}^n$. We work with \emph{square} systems, i.e., systems with an equal number of inputs and outputs. Note that non-square systems can always be patched with zeros to make them square, hence this does not form a limitation. We assume $H 0 = 0$, i.e., the zero input produces zero output. The incremental gain of $H:\mathcal{L}_{2e}^n \to \mathcal{L}_{2e}^n$ is given by

 \begin{equation*}
        \|H\|_\Delta := \sup\left\{\tfrac{\|Hu-Hv\|}{\|u-v\|}\mid u,v\in \mathcal{L}_2^n, \:u\neq v\right\}.
    \end{equation*}
   We call a system bounded (also referred to as incrementally stable) if $\|H\|_\Delta <\infty$. Causality of a system means that $\Gamma_THu = \Gamma_TH\Gamma_Tu$ for all $T\geq 0$ and all $u \in \mathcal{L}_{2e}^n$. For an LTI system $H$ we will write with some abuse of notation $H(s)$ and $H(j\omega)$ to denote its transfer function and frequency-response function. The graph and extended graph of a system $H$ are denoted by $
        G(H)=\left\{(u,y)\in \mathcal{L}_2^n\times \mathcal{L}_2^n \mid y= Hu\right\},
   $ and $G_e(H)=\left\{(u,y)\in \mathcal{L}_{2e}^n\times \mathcal{L}_{2e}^n\mid y= Hu\right\}$, respectively. We will use $\Delta(\cdot)$ as a shorthand notation for the incremental signal $(\cdot)_1-(\cdot)_2$, e.g., $\Delta v = v_1-v_2$. 

\section{Background on SRGs and IQCs}\label{sec:SRG_IQC}
In this section, we review background on SRGs and IQCs, and recall useful connections between these concepts. 
\subsection{Scaled Relative Graphs (SRGs)}
SRGs allow for graphically depicting input-output properties of a system $H$ expressed in terms of gain and phase. Given signals $u\in \mathcal{L}_2^n$ and $y\in \mathcal{L}_2^n$, we define the gain $\rho(u,y)\in [0,\infty]$ from $u$ to $y$ by
\begin{equation}\label{eq:gain}
    \rho(u,y):=\begin{cases}
        \|y\|/\|u\|&\textup{if } u\neq 0,\\
        0 & \textup{if } u=0,y=0,\\
        \infty&\textup{if } u = 0, y\neq 0,
    \end{cases}
\end{equation}
and the phase $\theta(u,y)\in [0,\pi]$ from $u$ to $y$ by
\begin{equation}\label{eq:phase}
    \theta(u,y):=\begin{cases}
        \arccos \frac{\langle u,y\rangle}{\|u\|\|y\|}&\textup{if } u,y\neq 0,\\
        0&\textup{otherwise}.
    \end{cases}
\end{equation}
Likewise, the gain and phase for signals $u \in \mathcal{L}_{2e}^n$, $y\in \mathcal{L}_{2e}^n$ and for $T> 0$ are defined by
\begin{equation}
    \rho_T(u,y):=\rho(u_T,y_T) \textup{ and } \theta_T(u,y):=\theta(u_T,y_T),
\end{equation}
as $u_T,y_T\in \mathcal{L}_2^n$ for $T\geq 0$. Equipped with these notions for gain and phase, we recall definitions for soft and hard SRGs. 

\begin{definition}\label{eq:SG}
    \noindent i) For a causal and bounded system $H:\mathcal{L}_2^n \to \mathcal{L}_2^n$ the \emph{soft SRG} is defined as
\begin{equation}\label{eq:sSRG}
\begin{split}
    &\textup{SRG}(H)=\big\{\rho(\Delta u,\Delta y)e^{\pm j\theta(\Delta u,\Delta y)}\mid\\
    & \qquad\qquad\qquad (u_1,y_1),(u_2,y_2)\in G(H), \;\Delta u \neq 0\big\}.
    \end{split}
\end{equation}
ii) For a causal system $H:\mathcal{L}_{2e}^n \to \mathcal{L}_{2e}^n$ the \emph{hard SRG} is defined as
\begin{equation}\label{eq:hSRG}
\begin{split}
    &\textup{SRG}_e(H)=\big\{\rho_T(\Delta u,\Delta y)e^{\pm j\theta_T(\Delta u,\Delta y)}\mid \\
    &\quad (u_1,y_1),(u_2,y_2)\in G_e(H),\;(\Delta u)_T\neq 0,\;T>0\big\}.
    \end{split}
\end{equation}
\end{definition}

For the specific case that $H$ is an LTI system, we additionally define the frequency-wise (soft) SRG as
\begin{equation}\label{eq:fwsrg}
\begin{split}
    \textup{SRG}(H(j\omega)) = &\big\{\rho(u,H(j\omega)u)e^{\pm j\theta(u,H(j\omega)u)} \mid \\
    &\quad\qquad\qquad \qquad \qquad 0\neq u \in \mathbb{C}^{n}\big\}. 
    \end{split}
\end{equation}
In case $H$ is SISO, \eqref{eq:fwsrg} recovers the Nyquist plot of $H$, i.e., the locus of points $\left\{H(j\omega)\mid \omega \in \mathbb{R}\right\}$, see \cite{Chen25cdc}. 

\subsection{Integral Quadratic Constraints (IQCs)}
IQCs provide a way of representing input-output relationships for complex dynamical systems in a form that is convenient for analysis. We next recall notions of soft and hard incremental IQCs \cite{Su2, Carrasco_seiler}.

\begin{definition}\label{def:IQC}
  Consider a causal system $H:\mathcal{L}_{2e}^n \to \mathcal{L}_{2e}^n$. Let $M=M^\top \in \mathbb{R}^{m\times m}$ and let $N:\mathcal{L}_{2}^{n}\times \mathcal{L}_{2}^n \to \mathcal{L}_{2}^{m}$ be a causal and bounded LTI system. 

  \noindent  i) $H$ is said to satisfy the \emph{soft incremental IQC} with multipliers $(M,N)$ if 
  \begin{equation}\label{eq:sIQC}
      \int_{0}^\infty \left(N \begin{bmatrix}
          \Delta u\\\Delta y
      \end{bmatrix} \right)^\top (t)M\left(N \begin{bmatrix}
          \Delta u\\\Delta y
      \end{bmatrix} \right)(t)dt \geq 0
  \end{equation}
  for all $(u_1,y_1), (u_2,y_2) \in G(H)$.
  
  \noindent  ii) $H$ is said to satisfy the \emph{hard incremental IQC} with multipliers $(M,N)$ if 
  \begin{equation}\label{eq:hIQC}
      \int_{0}^T \left(N \begin{bmatrix}
          \Delta u\\\Delta y
      \end{bmatrix} \right)^\top (t)M\left(N \begin{bmatrix}
          \Delta u\\\Delta y
      \end{bmatrix} \right)(t)dt \geq 0
  \end{equation}
  for all $(u_1,y_1), (u_2,y_2) \in G_e(H)$ and for all $T>0$.
\end{definition}

We highlight a few aspects regarding Definition~\ref{def:IQC}.
\begin{enumerate}
    \item It follows directly from Parseval's theorem that the soft IQC in \eqref{eq:sIQC} expressed in time-domain can equivalently be expressed in the frequency-domain as
\begin{equation}
    \int_{-\infty}^\infty \begin{bmatrix}
        \Delta \hat{u}(j\omega)\\\Delta \hat{y}(j\omega)
    \end{bmatrix} ^*\Theta(j\omega)\begin{bmatrix}
        \Delta \hat{u}(j\omega)\\\Delta \hat{y}(j\omega)
    \end{bmatrix}d\omega \geq 0,
\end{equation}
with $\Theta(j\omega) = N(j\omega)^*MN(j\omega)$ being a bounded, Hermitian-valued matrix. 
\item The IQC representation in \eqref{eq:hIQC} in terms of $M$ and $N$ is also known as a hard factorization \cite{Carrasco_seiler}. In the upcoming approach, we assume that $N$ admits the realization
\begin{equation}\label{eq:Ns}
  N(s)=C_N(sI-A_N)^{-1}B_N+D_N,
\end{equation}
with $(A_N,B_N,C_N,D_N)$ of appropriate dimensions, and $A_N$ Hurwitz (i.e., $N$ is bounded). 
\item When $N=I$, \eqref{eq:sIQC} and \eqref{eq:hIQC} reduce to static IQCs defined by the static multiplier $M$. Classical examples include 
\begin{equation}
   M = \begin{bmatrix}
        \gamma^2 I & 0\\
        0 & -I
    \end{bmatrix}, \:\:\textup{ and }\:\:  M=\begin{bmatrix}
        0&I\\
        I&0
    \end{bmatrix}, 
\end{equation}
representing incremental gain and passivity properties. 
\end{enumerate}

\subsection{Connections between SRGs and static IQCs}
Although distinct in form, there exists a connection between SRGs and \emph{static} IQCs, which we recall here from \cite{Groot}. 

\begin{lemma}\label{lem:SRGIQC}
      Let $N=I$ and $M = \Pi \otimes I$ in \eqref{eq:sIQC} and \eqref{eq:hIQC}, where $\Pi = \Pi^\top \in \mathbb{R}^{2\times 2}$ is given. A causal system $H:\mathcal{L}_{2e}^n \to \mathcal{L}_{2e}^n$ satisfies the soft incremental IQC in \eqref{eq:sIQC} if and only if $\textup{SRG}(H) \subseteq \mathcal{S}(\Pi)$, with
\begin{align} \label{eq:SPI}
	\mathcal{S}(\Pi) := \left\{ z \in\mathbb{C} \left|  \begin{bmatrix}
		1 \\ z
	\end{bmatrix}^* \Pi \begin{bmatrix}
		1 \\ z
	\end{bmatrix} \geq 0 \right.  \right\}.
\end{align} 
Furthermore, $H$ satisfies the hard incremental IQC in \eqref{eq:hIQC} if and only if $\textup{SRG}_e(H) \subseteq \mathcal{S}(\Pi)$.
\end{lemma}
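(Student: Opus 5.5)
The plan is to reduce both statements to a pointwise identity. For each increment pair, the IQC integral equals a nonnegative multiple of the quadratic form defining $\mathcal{S}(\Pi)$, evaluated at the corresponding SRG point. Write $\Pi = \begin{bmatrix} \pi_{11} & \pi_{12}\\ \pi_{12} & \pi_{22}\end{bmatrix}$. With $N=I$ and $M=\Pi\otimes I$, the soft IQC integrand is $\pi_{11}|\Delta u|^2 + 2\pi_{12}\Delta u^\top \Delta y + \pi_{22}|\Delta y|^2$. Hence \eqref{eq:sIQC} reads
\begin{equation*}
\pi_{11}\|\Delta u\|^2 + 2\pi_{12}\langle \Delta u,\Delta y\rangle + \pi_{22}\|\Delta y\|^2 \geq 0.
\end{equation*}

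First I treat the case $\Delta u\neq 0$. Let $\rho=\rho(\Delta u,\Delta y)$, $\theta=\theta(\Delta u,\Delta y)$ and $z=\rho e^{\pm j\theta}$. Then $\langle\Delta u,\Delta y\rangle = \|\Delta u\|\|\Delta y\|\cos\theta = \|\Delta u\|^2 \rho\cos\theta = \|\Delta u\|^2\,\mathrm{Re}\,z$ and $\|\Delta y\|^2=\|\Delta u\|^2|z|^2$. When $\Delta y=0$ we get $\rho=0$, $z=0$, and the identity is trivial. Dividing by $\|\Delta u\|^2>0$, the IQC expression becomes
\begin{equation*}
\pi_{11}+2\pi_{12}\mathrm{Re}\,z+\pi_{22}|z|^2 = \begin{bmatrix}1\\ z\end{bmatrix}^*\Pi\begin{bmatrix}1\\ z\end{bmatrix}.
\end{equation*}
This holds for both signs of $\pm$, because $\mathcal{S}(\Pi)$ is symmetric under conjugation. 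So the IQC inequality for a given pair holds iff the two SRG points generated by that pair lie in $\mathcal{S}(\Pi)$. Quantifying over all pairs in $G(H)$ gives the soft equivalence: (IQC $\Rightarrow$ SRG $\subseteq\mathcal{S}(\Pi)$) is immediate, and conversely every pair with $\Delta u\ne0$ produces a point of $\textup{SRG}(H)$, so its IQC value is $\|\Delta u\|^2$ times a nonnegative number.

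The main obstacle is the pairs with $\Delta u=0$, which the SRG definition excludes. I expect to handle them with the standing convention on systems. If $\Delta u=0$, then $u_1=u_2$, so $y_1=Hu_1=Hu_2=y_2$ because $H$ is an operator (single-valued). Thus $\Delta y=0$ and the IQC integrand vanishes identically, so these pairs satisfy the IQC trivially. For the hard case the situation is subtler: $(\Delta u)_T=0$ with $\Delta u\neq 0$ is possible. Causality gives $(\Delta y)_T=\Gamma_T H u_1-\Gamma_T H u_2 = \Gamma_T H\Gamma_T u_1 - \Gamma_T H \Gamma_T u_2 = 0$, since $\Gamma_T u_1=\Gamma_T u_2$. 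So the truncated integral is again $0$.

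The hard statement then follows by the same computation with $\Delta u,\Delta y$ replaced by $(\Delta u)_T,(\Delta y)_T$, using that $\int_0^T$ of the integrand equals the $\mathcal{L}_2$ expression of the truncated signals and that $\rho_T,\theta_T$ are defined via truncations. Quantifying over all $T>0$ and all pairs in $G_e(H)$ yields $\textup{SRG}_e(H)\subseteq\mathcal{S}(\Pi)$ iff \eqref{eq:hIQC} holds.
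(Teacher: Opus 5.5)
Your proof is correct. The IQC integral equals $\|\Delta u\|^2\,[1;\,z]^*\Pi\,[1;\,z]$ (with $(\Delta u)_T,(\Delta y)_T$ in the hard case), and you handle the degenerate increments properly: single-valuedness of $H$ when $\Delta u=0$, and causality when $(\Delta u)_T=0$. Together these give both equivalences. The paper does not prove this lemma itself but recalls it from de Groot et al., and your direct computation is the standard argument behind it, so there is no separate proof in the paper to compare against.
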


Lemma~\ref{lem:SRGIQC} characterizes SRG-fullness: the class of systems satisfying the static IQC has its SRG contained in $\mathcal{S}(\Pi)$, and, conversely, any system whose SRG is contained in $\mathcal{S}(\Pi)$ satisfies the static IQC. This equivalence allows class membership to be tested graphically, and enables tight overbounding of the SRG for an entire system class. The shape of the overbounding region depends on the static multiplier $\Pi$. When $\det(\Pi)\geq 0$, this region reduces to either the entire complex plane, the empty set, or a circle (or single point). When $\det(\Pi)<0$, this region corresponds to a half-plane or the interior/exterior of a disk centered on the real axis.

Motivated by the characterization in Lemma~\ref{lem:SRGIQC} for static IQCs, in the next section we develop related connections between SRGs and \emph{dynamic} IQCs, and exploit these results to overbound the SRGs of (classes of) nonlinear systems.

\section{SRG overbounding via dynamic IQCs}\label{sec:main}
This section presents our first main result in the form of soft and hard SRG computations based on dynamic IQCs.
\subsection{Overbounding soft SRGs}
We first present a result for overbounding soft SRGs.

\begin{theorem}\label{th:sSRG1}
      Consider a causal and bounded system $H:\mathcal{L}_{2e}^n\to \mathcal{L}_{2e}^n$ and suppose that this system satisfies a soft incremental IQC of the form as in \eqref{eq:sIQC} with multipliers $(M,N)$. If there exist a symmetric matrix $\Pi \in\mathbb{R}^{2\times 2}$, and $\tau> 0$ such that the frequency-domain matrix inequality
\begin{equation}\label{eq:FDI}
\begin{bmatrix}
    I \\ N(j\omega)
\end{bmatrix}^*\begin{bmatrix}
    \Pi \otimes I & 0 \\
    0 & -\tau M
\end{bmatrix}\begin{bmatrix}
    I \\ N(j\omega)
\end{bmatrix}\succeq 0
    \end{equation}
holds for all $\omega \in \mathbb{R}$, then $\textup{SRG}(H) \subseteq S(\Pi)$ with $\mathcal{S}(\Pi)$ as defined in \eqref{eq:SPI}.
\end{theorem}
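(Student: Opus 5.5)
The plan is an S-procedure argument in the frequency domain. Parseval's identity turns the pointwise inequality \eqref{eq:FDI} into an integral inequality along every incremental trajectory. The soft IQC then cancels the multiplier term, which leaves a static IQC defined by $\Pi\otimes I$. Lemma~\ref{lem:SRGIQC} converts that static IQC into the claimed SRG inclusion.

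First I fix arbitrary pairs $(u_1,y_1),(u_2,y_2)\in G(H)$ with $\Delta u\neq 0$. Since $H$ is bounded, $\Delta u,\Delta y\in\mathcal{L}_2^n$ and their Fourier transforms exist. Set $w(j\omega)=\begin{bmatrix}\Delta\hat u(j\omega)\\ \Delta\hat y(j\omega)\end{bmatrix}\in\mathbb{C}^{2n}$. Multiplying \eqref{eq:FDI} on the left by $w^*$ and on the right by $w$, and integrating over $\omega\in\mathbb{R}$, gives
\begin{equation*}
\int_{-\infty}^{\infty} w^*(\Pi\otimes I)w\,d\omega \;\geq\; \tau\int_{-\infty}^{\infty} (N(j\omega)w)^* M (N(j\omega)w)\,d\omega .
\end{equation*}
Here I use that $N$ is stable and LTI, so that $N(j\omega)w(j\omega)$ is the Fourier transform of $N\begin{bmatrix}\Delta u\\ \Delta y\end{bmatrix}\in\mathcal{L}_2^m$. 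The right-hand side is then, by Parseval, $2\pi\tau$ times the left-hand side of \eqref{eq:sIQC}. It is therefore nonnegative, because $\tau>0$ and $H$ satisfies the soft IQC.

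Applying Parseval once more to the left-hand side yields
\begin{equation*}
\int_0^\infty \begin{bmatrix}\Delta u\\ \Delta y\end{bmatrix}^\top(\Pi\otimes I)\begin{bmatrix}\Delta u\\ \Delta y\end{bmatrix}dt\geq 0.
\end{equation*}
Because the two pairs were arbitrary, $H$ satisfies the soft incremental IQC with $N=I$ and $M=\Pi\otimes I$. The ordering of the Kronecker product must match the stacking $(\Delta u,\Delta y)$, so that $\Pi\otimes I$ acts as $\pi_{11}|\Delta u|^2+2\pi_{12}\Delta u^\top\Delta y+\pi_{22}|\Delta y|^2$.

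Lemma~\ref{lem:SRGIQC} now gives $\textup{SRG}(H)\subseteq\mathcal{S}(\Pi)$. As a direct check, dividing by $\|\Delta u\|^2$ gives $\pi_{11}+2\pi_{12}\rho\cos\theta+\pi_{22}\rho^2\geq 0$, which is exactly $\begin{bmatrix}1\\ z\end{bmatrix}^*\Pi\begin{bmatrix}1\\ z\end{bmatrix}\geq0$ for $z=\rho e^{\pm j\theta}$. Both conjugates are covered since $\Pi$ is real.

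The main obstacle is the bookkeeping rather than any deep step. I need the same Parseval normalization on both sides, and I need the block structure of \eqref{eq:FDI}: its $(1,1)$ block $\Pi\otimes I$ acts on the raw increments, while $-\tau M$ acts on $N$ applied to them. Integrability of each term also has to be justified, which follows from boundedness of $\Pi$ and of $N(j\omega)$ together with $w\in\mathcal{L}_2$.
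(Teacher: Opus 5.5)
Your proposal is correct and follows essentially the same route as the paper's proof. You sandwich \eqref{eq:FDI} with the Fourier transform of the stacked increment, integrate over frequency, and use the soft IQC via Parseval to discard the $\tau M$ term; then Parseval again and Lemma~\ref{lem:SRGIQC} applied to the resulting static IQC give the inclusion, with your normalization, integrability, and direct gain--phase checks being harmless extra bookkeeping.
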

\begin{proof}
    Pre- and post multiply the inequality in \eqref{eq:FDI} by $\Delta \hat{v}(j\omega)=\begin{bmatrix}
            \Delta \hat{u}(j\omega)^* &\Delta \hat{y}(j\omega)^*
        \end{bmatrix}^*$ and its conjugate transpose, with $\Delta \hat{u}(j\omega), \Delta \hat{y}(j\omega)$ the Fourier transforms of signals $\Delta u, \Delta y$ for which $u_i,y_i \in \mathcal{L}_2^n$, $i=\left\{1,2\right\}$, and integrate over $\omega$ from $\omega = -\infty$ to $\omega = +\infty$. Invoking the soft IQC in \eqref{eq:sIQC}, then implies that
        \begin{equation*}
            \int_{-\infty}^\infty \Delta \hat{v}(j\omega)^* (\Pi \otimes I)\Delta \hat{v}(j\omega)d\omega \geq 0.
        \end{equation*}
  Application of Parseval's relation shows that for all $(u_1,y_1),(u_2,y_2) \in G(H)$ the time-domain inequality
\begin{equation}\label{eq:dp}
        \langle \Delta v, \Pi \Delta v\rangle \geq 0,
    \end{equation}
    representing a static IQC, holds true. Direct application of Lemma~\ref{lem:SRGIQC} yields the result. 
\end{proof}

When $H$ is restricted to be LTI, the inequality in \eqref{eq:FDI} can be simplified by exploiting the relation $\Delta \hat{y}(j\omega) = H(j\omega) \Delta\hat{u}(j\omega)$, as formalized in the next corollary.

\begin{corollary}\label{col:sSRG1}
    Let $H$ be a bounded LTI system with transfer function matrix $H(s) \in \mathbb{C}^{n\times n}$. If there exists a symmetric matrix $\Pi \in \mathbb{R}^{2\times 2}$ such that the frequency-domain inequality 
    \begin{equation}
        \begin{bmatrix}
            I\\H(j\omega)
        \end{bmatrix}^*\left(\Pi \otimes I\right)\begin{bmatrix}
           I\\ H(j\omega)
        \end{bmatrix}\succeq 0
    \end{equation}holds for all $\omega \in \mathbb{R}$, then $\textup{SRG}(H) \subseteq S(\Pi)$ with $\mathcal{S}(\Pi)$ as defined in \eqref{eq:SPI}.
\end{corollary}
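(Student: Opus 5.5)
The plan is to reduce the corollary to the static-IQC characterization of Lemma~\ref{lem:SRGIQC}, either directly or by specializing Theorem~\ref{th:sSRG1}. The direct route seems cleanest. Fix $(u_1,y_1),(u_2,y_2)\in G(H)$ and set $\Delta u = u_1-u_2$. Because $H$ is LTI and bounded, $\Delta y = H\Delta u \in \mathcal{L}_2^n$. Its Fourier transform satisfies $\Delta\hat y(j\omega) = H(j\omega)\Delta\hat u(j\omega)$ for almost all $\omega$. Hence
\begin{equation*}
\begin{bmatrix}\Delta\hat u(j\omega)\\ \Delta\hat y(j\omega)\end{bmatrix} = \begin{bmatrix} I\\ H(j\omega)\end{bmatrix}\Delta\hat u(j\omega).
\end{equation*}

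Next I pre- and post-multiply the assumed frequency-domain inequality by $\Delta\hat u(j\omega)^*$ and $\Delta\hat u(j\omega)$. This gives a pointwise nonnegative integrand
\begin{equation*}
\begin{bmatrix}\Delta\hat u(j\omega)\\ \Delta\hat y(j\omega)\end{bmatrix}^*(\Pi\otimes I)\begin{bmatrix}\Delta\hat u(j\omega)\\ \Delta\hat y(j\omega)\end{bmatrix}\geq 0 .
\end{equation*}
Integrating over $\omega\in\mathbb{R}$ and applying Parseval's relation, exactly as in the proof of Theorem~\ref{th:sSRG1}, I get the time-domain inequality $\langle \Delta v,(\Pi\otimes I)\Delta v\rangle\geq 0$ with $\Delta v = (\Delta u,\Delta y)$.

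This is precisely the soft incremental IQC \eqref{eq:sIQC} with $N=I$ and $M=\Pi\otimes I$, and it holds for all pairs in $G(H)$. Lemma~\ref{lem:SRGIQC} then gives $\textup{SRG}(H)\subseteq\mathcal{S}(\Pi)$.

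An equivalent route is to view the LTI relation $y=Hu$ itself as a trivially satisfied IQC and invoke Theorem~\ref{th:sSRG1} with $\tau\to 0$. However, the theorem requires $\tau>0$, so the direct argument above avoids that technicality.

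The only delicate point is justifying the Parseval step and the integrability of the integrand. Boundedness of $H$ gives $H\in\mathcal{H}_\infty$, so $H(j\omega)$ is essentially bounded and $\Delta\hat y\in\mathcal{L}_2$. The integrand is therefore integrable, and the real symmetric structure of $\Pi\otimes I$ makes the quadratic form real. Nothing else is needed, so I expect this step to be the main, though mild, obstacle.
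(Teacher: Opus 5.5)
Your proof is correct and more direct than the paper's. The paper does not substitute into the hypothesis and integrate. Instead it encodes the LTI relation as a soft IQC that holds with equality, taking $N(j\omega)=\textup{diag}(H(j\omega),I)$ and $M=\left[\begin{smallmatrix}1&-1\\-1&1\end{smallmatrix}\right]$, so the IQC reads $\|H\Delta u-\Delta y\|^2\geq 0$. It then observes that $\tau$ in Theorem~\ref{th:sSRG1} may have either sign. Finally it invokes Finsler's lemma with $\Delta\hat v=\left[\begin{smallmatrix}I\\H(j\omega)\end{smallmatrix}\right]\Delta\hat u$, turning your constrained condition (nonnegativity of $\Pi\otimes I$ on the range of $\left[\begin{smallmatrix}I\\H(j\omega)\end{smallmatrix}\right]$) into the unconstrained inequality \eqref{eq:FDI}. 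That packaging makes the corollary a literal instance of the S-procedure in Theorem~\ref{th:sSRG1}. Your route needs no multiplier, so it sidesteps the non-strict Finsler step, which can actually fail. Take $\Pi=\left[\begin{smallmatrix}0&1\\1&0\end{smallmatrix}\right]$ and $H(s)=s/(s+1)$ at $\omega=0$: your hypothesis holds there, but $\Pi-\tau N(0)^*MN(0)=\left[\begin{smallmatrix}0&1\\1&-\tau\end{smallmatrix}\right]$ has determinant $-1$ for every real $\tau$. Your route also avoids the fact that Finsler's lemma generally yields a frequency-dependent $\tau$, whereas Theorem~\ref{th:sSRG1} uses a constant one. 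In effect, your argument is the proof of Theorem~\ref{th:sSRG1} with the S-procedure step removed. One correction to your aside: the alternative route is not ``$\tau\to 0$''. At $\tau=0$, \eqref{eq:FDI} demands $\Pi\otimes I\succeq 0$, which is far stronger than your hypothesis. The paper instead allows $\tau\in\mathbb{R}$, which is legitimate exactly because the IQC holds with equality.
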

\begin{proof}
    Take $N(j\omega) = \textup{diag}( H(j\omega),I)$ and $M = \left[\begin{smallmatrix}
        1 & -1\\ -1 & 1
    \end{smallmatrix}\right]$, and note that the soft IQC in \eqref{eq:sIQC} holds with an equality in this case. As such, the condition $\tau \geq 0$ in Theorem~\ref{th:sSRG1} can be relaxed to $\tau \in \mathbb{R}$. Using $\Delta \hat{v}(j\omega) = [I, H(j\omega)^*]^*\Delta \hat{u}(j\omega)$ and applying Finsler's lemma \cite{Meijer} yield the result.
\end{proof}
Corollary~\ref{col:sSRG1} provides a frequency-domain equivalent to the statements in \cite[Theorem 9]{Groot} linking dissipativity LMIs and IQCs to soft SRGs for LTI systems. 

\subsection{Overbounding hard SRGs}
In a similar spirit as for soft SRG computations, we provide a key result for overbounding hard SRGs.
\begin{theorem}\label{th:hSRG1}
      Consider a causal system $H:\mathcal{L}_{2e}^n\to \mathcal{L}_{2e}^n$ and suppose that this system satisfies a hard incremental IQC of the form as in \eqref{eq:hIQC} with multipliers $(M,N)$, where $N(s)=C_N(sI-A_N)^{-1}B_N+D_N$ is causal and bounded. If there exist a symmetric matrix $\Pi \in\mathbb{R}^{2\times 2}$, a matrix $P=P^\top \succeq 0$, and a number $\tau> 0$ such that the LMI
  \begin{equation}\label{eq:LMI1}
        \begin{bmatrix}
            A_N^\top P+PA_N & PB_N \\
            B_N^\top P &-\Pi\otimes I
        \end{bmatrix}+\tau V^\top M V\preceq 0,
    \end{equation}
   with $V = [C_N,\; D_N]$ holds, then $\textup{SRG}_e(H) \subset S(\Pi)$ with $\mathcal{S}(\Pi)$ as defined in \eqref{eq:SPI}.
\end{theorem}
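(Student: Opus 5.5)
The plan is a standard dissipation-inequality argument: use the LMI \eqref{eq:LMI1} to show that the static hard IQC with multiplier $\Pi\otimes I$ holds on every finite horizon, and then conclude with Lemma~\ref{lem:SRGIQC}.

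\textbf{Step 1 (state-space filter).} Fix $(u_1,y_1),(u_2,y_2)\in G_e(H)$ and set $\Delta v = [\Delta u^\top,\ \Delta y^\top]^\top \in \mathcal{L}_{2e}^{2n}$. Feed $\Delta v$ into the realization \eqref{eq:Ns} of $N$ with zero initial state, so that
\[
\dot x = A_N x + B_N \Delta v, \qquad x(0)=0, \qquad z = C_N x + D_N \Delta v = V\,[x^\top,\ \Delta v^\top]^\top .
\]
Here $z = N\Delta v$, and since $N$ is causal, $z$ on $[0,T]$ depends only on $\Delta v$ on $[0,T]$. The hard IQC \eqref{eq:hIQC} then states that $\int_0^T z^\top M z\,dt\ge 0$ for all $T>0$.

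\textbf{Step 2 (dissipation inequality).} Take the storage function $S(x)=x^\top P x$. Multiply \eqref{eq:LMI1} on both sides by $[x(t)^\top,\ \Delta v(t)^\top]$ and its transpose. The upper-left block gives $\frac{d}{dt}S(x(t))$, because
\[
\frac{d}{dt}S(x) = x^\top(A_N^\top P + P A_N)x + 2x^\top P B_N \Delta v .
\]
This yields the pointwise inequality
\[
\frac{d}{dt}S(x(t)) - \Delta v(t)^\top(\Pi\otimes I)\Delta v(t) + \tau\, z(t)^\top M z(t) \le 0 .
\]
Integrate from $0$ to $T$ and use $x(0)=0$. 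Since $P\succeq 0$, we have $S(x(T))\ge 0$, and since $\tau>0$, the hard IQC term is nonnegative. Together these give
\[
\int_0^T \Delta v^\top(\Pi\otimes I)\Delta v\,dt \;\ge\; S(x(T)) + \tau\int_0^T z^\top M z\,dt \;\ge\; 0 .
\]
This holds for all $T>0$ and all pairs in $G_e(H)$. It is exactly the static hard incremental IQC with $N=I$ and $M=\Pi\otimes I$. Lemma~\ref{lem:SRGIQC} then gives $\textup{SRG}_e(H)\subseteq\mathcal{S}(\Pi)$.

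\textbf{Main obstacle.} The delicate point is the bookkeeping in Step 2: the sign conventions in \eqref{eq:LMI1} must produce exactly the integrand above, with the correct direction for both the $\Pi$ term and the $\tau M$ term. It also has to be checked that $\Delta v\in\mathcal{L}_{2e}$ (a difference of $\mathcal{L}_{2e}$ signals) keeps the integrals on $[0,T]$ finite and the state $x$ well defined. Both hold because $A_N$ is Hurwitz and the signals are locally square integrable. Causality of $N$ is used to match $\int_0^T z^\top M z$ with the truncated hard IQC.
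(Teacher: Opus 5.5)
Your proposal is correct and follows essentially the same route as the paper: realize $N$ in state space with zero initial state, use $x^\top P x$ as a storage function so the LMI becomes a pointwise dissipation inequality, integrate over $[0,T]$ using $P\succeq 0$, $\tau>0$ and the hard IQC to obtain the static hard IQC with multiplier $\Pi\otimes I$, and conclude with Lemma~\ref{lem:SRGIQC}. Your sign bookkeeping in Step~2 checks out.
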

\begin{proof}
  Define signals $g = N \Delta v$, with $\Delta v = v_1-v_2$, $v_i =(u_i,y_i) \in G_e(H)$, $i=\left\{1,2\right\}$. Note that the LTI system $N$ admits the state-space representation
  \begin{equation}
      N:\begin{cases}
          \dot{x}_N&=A_Nx_N+B_N\Delta v,\quad x_N(0)=0,\\
          g&=C_Nx_N+D_N\Delta v.
      \end{cases}
  \end{equation}
Pre- and post-multiplication of \eqref{eq:LMI1} with $[x_N^\top, \Delta v^\top]^\top$ yields
\begin{equation*}
    \frac{d}{dt}W(x_N(t)) +\tau  g(t)^\top Mg(t)\leq \Delta v(t)^\top(\Pi \otimes I)\Delta v(t),
\end{equation*}
where $W(x_N) = x_N^\top Px_N$. Integrating the above inequality over $[0,T]$ and using both $W(x_N)\geq 0$ and $W(0)=0$ yields
\begin{equation*}
    \tau \int_0^Tg(t)^\top Mg(t)dt \leq \int_0^T \Delta v(t)^\top(\Pi \otimes I)\Delta v(t)dt.
\end{equation*}
As the left-hand side of the above inequality is non-negative for all $v_1,v_2 \in G_e(H)$ due to the hard IQC, it follows that $\int_0^T \Delta v(t)^\top(\Pi \otimes I)\Delta v(t)dt\geq 0$. Application of Lemma~\ref{lem:SRGIQC} finishes the proof. 
\end{proof}

We highlight several important aspects of both Theorem~\ref{th:sSRG1} and Theorem~\ref{th:hSRG1}.
\begin{enumerate}
    \item The incremental soft and hard IQCs are used via the S-procedure to represent the system $H$, i.e., $M$ and $N$ embed system characteristics in both the frequency-domain inequality \eqref{eq:FDI} and the LMI \eqref{eq:LMI1}, thereby avoiding the need for a complex system model that could hinder efficient computations.
    \item The matrix $\Pi$ is a free variable that defines the geometry of the SRG overbounding region according to \eqref{eq:SPI}. Hence, its role is to embed the SRG overbounding structure in the inequalities. 
    \item When $N$ in Theorem~\ref{th:sSRG1} is an LTI filter that admits the transfer function realization in \eqref{eq:Ns} with the pair $(A_N,B_N)$ controllable, the frequency-domain inequality \eqref{eq:FDI} can be recast via the Kalman-Yakubovich-Popov lemma \cite{Rantzer} as the LMI in \eqref{eq:LMI1}, with the difference that the extra condition $P\succeq 0$ can be dropped.
    \item In contrast to Lemma~\ref{lem:SRGIQC}, Theorems~\ref{th:sSRG1} and~\ref{th:hSRG1} do not establish SRG-fullness; they only show that the class of systems characterized by a dynamic IQC has its SRG contained in $\mathcal{S}(\Pi)$, but whether the converse is true remains open. On the other hand, class membership can be \emph{excluded}: if $\textup{SRG}(H)/\textup{SRG}_e(H)\not\subseteq\mathcal{S}(\Pi)$, then $H$ does not satisfy the respective soft/hard dynamic IQC. 
\end{enumerate}

\subsection{Overbound refinements}
To systematically refine overbounds of $\textup{SRG}(H)$ and $\textup{SRG}_e(H)$ based on Theorem~\ref{th:sSRG1} and Theorem~\ref{th:hSRG1}, respectively, we search for a number of matrices $\Pi$ that satisfy \eqref{eq:FDI} and \eqref{eq:LMI1}. To this end, we parametrize $\Pi$ as 
\begin{equation}\label{eq:PI}
    \Pi(\sigma, \lambda, r) := \sigma \begin{bmatrix}
       r^2- \lambda^2 & \lambda \\
        \lambda & -1
    \end{bmatrix},
\end{equation}
with $\sigma \in \left\{-1, +1\right\}$, $\lambda\in \mathbb{R}$, and $r >0$, see also \cite{Groot} for a further motivation. As $\det(\Pi)=-r^2<0$, for this choice of $\Pi$ the region $\mathcal{S}(\Pi)$ in \eqref{eq:SPI} corresponds to either the interior $(\sigma=+1)$ or the exterior $(\sigma = -1)$ of a disk in the complex plane, centered at $\lambda$ on the real axis with radius $r$. In addition, we let \begin{equation}
    \Pi(\sigma,\lambda, \infty):= \begin{bmatrix}
    0 & \lambda \\
    \lambda & 0
\end{bmatrix},
\end{equation} to strictly allow for half-planes. For each $\lambda \in \Lambda \subseteq \mathbb{R}$ we minimize $\sigma r^2$ subject to \eqref{eq:FDI} respectively \eqref{eq:LMI1} and collect the resulting matrices $\Pi$ in the sets $\mathbf{\Pi}(M,N)$ and $\mathbf{\Pi}_e(M,N)$. Intersecting the associated regions $\mathcal{S}(\Pi)$ yields the overbounds 
    \begin{align}
        \textup{SRG}_\circ(H) &\subseteq \bigcap_{\Pi \in \mathbf{\Pi}_\circ(M,N)}\mathcal{S}(\Pi),
    \end{align}
where $\circ$ refers to either the soft or hard case. If $H$ is known to satisfy multiple soft or hard IQCs, we can repeat the above procedure for each individual IQC and/or linear combinations thereof to tighten the overbounding regions.  

\begin{remark}
    \normalfont  In the special case where $H$ is LTI and satisfies $H(s)^*H(s) = H(s)H(s)^*$, it follows from Corollary~\ref{col:sSRG1} and \cite[Theorem 12]{Groot} that applying the refinement procedure with $\Lambda = \mathbb{R}$ yields the exact SRG. 
\end{remark}

\begin{remark}
\normalfont \normalfont
In parallel with the analysis of incremental properties via SRGs and incremental IQCs, the same constructions can be specialized to \emph{non-incremental} properties by fixing one input at the origin, e.g., $u_2=0$. This specialization leads to the notion of scaled graphs (SGs) and non-incremental IQCs, see also \cite[Section III.B]{Chaffey23}.
\end{remark}

\subsection{Illustrative example}\label{sec:ex}
Consider a system that simultaneously satisfies a static incremental soft IQC with multipliers
\begin{equation}\label{eq:SMN}
    M = \begin{bmatrix}
        0.4 & 0.5 \\
        0.5 & -2
    \end{bmatrix}, \quad \textup{and} \quad N=I,
\end{equation}
and a dynamic incremental soft IQC with multipliers
\begin{equation}\label{eq:HMN}
  M= \begin{bmatrix}
       0 & 1 \\ 1 & 0
   \end{bmatrix}, \quad \textup{and} \quad N(s) = \begin{bmatrix}
    1 & - 0.5\\
    0 & \tfrac{s+1}{s+2} 
\end{bmatrix}.
\end{equation}
Note that \eqref{eq:HMN} describes a specifc Zames-Falb multiplier \cite{Carrasco}. Explicit examples of systems satisfying these IQCs include:
\begin{enumerate}[label=\alph*)]
    \item An LTI system with transfer function 
     \begin{equation}\label{eq:LTI_example}
    H(s) = 0.13\tfrac{s+2}{(s+0.6)^2};\end{equation}
    \item A static nonlinearity that satisfies the slope restriction
    \begin{equation}
          0\leq \frac{Hu_1-Hu_2}{u_1-u_2} \leq \frac{1}{2} \quad \textup{ for all } u_1,u_2\in \mathbb{R}.
    \end{equation}
\end{enumerate}
To generate an overbound on the soft SRG of systems satisfying these IQCs, we apply the above procedure in which we solve the LMI in \eqref{eq:LMI1} (excluding the condition $P\succeq 0$). The result is shown in Fig.~\ref{fig:SRGexample} in grey. The region obtained with the static IQC in \eqref{eq:SMN} alone is represented by the hatched region. It can be seen that adding the dynamic IQC significantly improves the overbounding region. The SRG of the LTI system \eqref{eq:LTI_example} is shown in blue for reference. 

\begin{figure}[h]
	\centering	\includegraphics[width=0.7\linewidth]{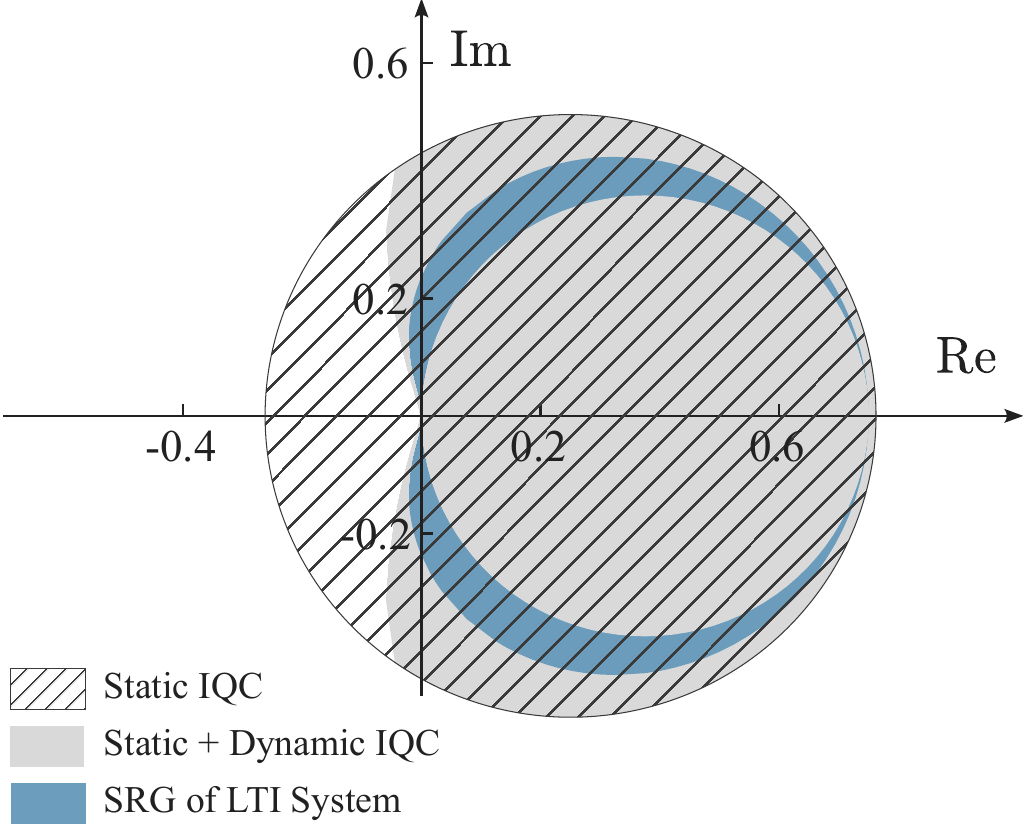}
	\caption{Overbounds on the soft SRG for systems satisfying the IQCs in \eqref{eq:SMN} and \eqref{eq:HMN} computed using: a) only the static IQC (hatched region), and b) both the static and dynamic IQCs. The SRG of the LTI system in \eqref{eq:LTI_example} is shown in blue.}
	\label{fig:SRGexample}
\end{figure}

\section{Feedback stability analysis}\label{sec:AS}
In this section, we exploit the above results for analysis of the feedback interconnection of an LTI system and a (dynamic) nonlinearity, also referred to as a Lur'e system.

\subsection{Lur'e system description}
Consider the feedback interconnection shown in Fig.~\ref{fig:FB}, where $H_1:\mathcal{L}_{2e}^n \to \mathcal{L}_{2e}^n$ is a causal LTI system, and $H_2 :\mathcal{L}_{2e}^n \to \mathcal{L}_{2e}^n$ is a causal nonlinear system. The functional equations describing the interconnection are given by
\begin{align}\label{eq:system}
\begin{cases}
u_1 = w + y_2,\\
u_2 = y_1,
\end{cases}
\quad \text{and} \quad\:
\begin{cases}
y_1 = H_1 u_1,\\
y_2 = H_2 u_2,
\end{cases}
\end{align} 
where $w \in \mathcal{L}_{2}^n$ are external signals, and  $u_1,y_1,u_2,y_2$ are internal signals. In the remainder, we assume that the feedback system is well-posed, that is, the mapping $u_1\mapsto w$ has a causal inverse on $\mathcal{L}_{2e}^n$. 

\tikzset{
    block/.style = {draw, rectangle,
        minimum height=1cm,
        minimum width=1cm},
    input/.style = {coordinate,node distance=1cm},
    output/.style = {coordinate,node distance=1cm},
    arrow/.style={draw, -latex,node distance=2cm},
    sum/.style = {draw, circle, node distance=.5cm, inner sep=.3em},
}      

\begin{figure}[htbt!]
\centering
\setlength{\unitlength}{1mm}
\begin{picture}(50,25)
\thicklines \put(0,20){\vector(1,0){8}} \put(10,20){\circle{4}}
\put(12,20){\vector(1,0){8}} \put(20,15){\framebox(10,10){$H_1$}}
\put(30,20){\vector(1,0){16}} \put(40,20){\line(0,-1){15}}
\put(40,5){\vector(-1,0){10}}  
 \put(20,0){\framebox(10,10){$H_2$}}
\put(20,5){\line(-1,0){10}} \put(10,5){\vector(0,1){13}}
\put(13,0){\makebox(5,5){$y_2$}} \put(32,20){\makebox(5,5){$y_1$}}
\put(0,20){\makebox(5,5){$w$}}  
\put(13,20){\makebox(5,5){$u_1$}} \put(32,0){\makebox(5,5){$u_2$}}
\end{picture} 
  \caption{Lur'e feedback interconnection.}
        \label{fig:FB}
\end{figure}
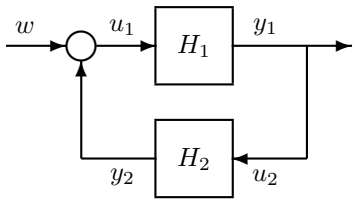 

We are interested in graphically showing that the well-posed interconnection in Fig.~\ref{fig:FB} is {bounded}  in the sense that the mapping $T:=w\mapsto y_1$ satisfies $\|T\|_\Delta <\infty$.

\subsection{An SRG--IQC stability result}
The result below establishes boundedness of the Lur'e system in Fig.~\ref{fig:FB} based on the separation of a frequency-wise SRG and an SRG overbound.
\begin{theorem}\label{th:fb}
     Consider the LTI system $H_1:\mathcal{L}_{2}^n \to \mathcal{L}_{2}^n$ and the nonlinearity $H_2:\mathcal{L}_2^n \to \mathcal{L}_2^n$, and suppose that the interconnection in Fig.~\ref{fig:FB} between $H_1$ and $\mu H_2$ is well-posed for all $\mu \in (0,1]$. In addition, suppose that $\mu H_2$ satisfies the soft IQC in \eqref{eq:sIQC} with multipliers $(M,N)$ for all $\mu \in (0,1]$. If there exists $r>0$ such that for all $\mu \in (0,1]$
    \begin{equation}\label{eq:dist}
        \textup{dist}(\textup{SRG}(H_1(j\omega)), \mu \mathcal{C}(M,N)) \geq r,
    \end{equation}
    with
    \begin{equation}
        \mathcal{C}(M,N) = \bigcap_{\Pi \in \mathbf{\Pi}(M,N)}\left\{z \in \mathbb{C}\left| \begin{bmatrix}
            z \\ 1
        \end{bmatrix}^*\Pi \begin{bmatrix}
            z \\ 1
        \end{bmatrix}\geq 0\right.\right\},
    \end{equation}
    and where $\mathbf{\Pi}(M,N)$ is the set of all matrices $\Pi$ satisfying \eqref{eq:FDI}, then $\|T\|_\Delta < \infty$. 
\end{theorem}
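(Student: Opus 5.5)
The plan is a homotopy argument in the spirit of the SRG interconnection theorem of \cite{Chaffey23} and the IQC theorem of \cite{Megretski}. The parameter $\mu\in(0,1]$ deforms the loop from a trivially bounded one ($\mu$ small) to the nominal one ($\mu=1$), and the uniform separation \eqref{eq:dist} supplies a $\mu$-independent incremental gain bound along the way.

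\textbf{Step 1.} For each $\mu\in(0,1]$, $\mu H_2$ satisfies the soft IQC with $(M,N)$. Theorem~\ref{th:sSRG1} then gives $\textup{SRG}(\mu H_2)\subseteq\mathcal{S}(\Pi)$ for every $\Pi\in\mathbf{\Pi}(M,N)$. By Lemma~\ref{lem:SRGIQC}, this is equivalent to the static incremental IQC $\langle\Delta v,(\Pi\otimes I)\Delta v\rangle\ge 0$ with $\Delta v=(\Delta u_2,\Delta y_2)$.

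Next I relate the region $\mathcal{C}(M,N)$, written with $[z;1]$ instead of $[1;z]$, to $\mathcal{S}(\Pi)$. Since $[z;1]^*\Pi[z;1]=|z|^2[1;z^{-1}]^*\Pi[1;z^{-1}]$, the set $\mathcal{C}(M,N)$ is the image of $\bigcap_\Pi\mathcal{S}(\Pi)$ under the SRG inversion $z\mapsto \bar z^{-1}$. It is therefore an overbound of the SRG of the inverse relation $(\mu H_2)^{-1}$, up to the $\mu$-scaling that must be tracked. This is the object that must stay away from the graph of the LTI loop. For the positive-feedback convention $u_1=w+y_2$, I will verify that the sign and scaling conventions match \eqref{eq:dist} as written.

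\textbf{Step 2.} I turn the graphical separation into an incremental inequality for the closed loop at fixed $\mu$. Take two solutions of \eqref{eq:system} with inputs $w_1,w_2\in\mathcal{L}_2^n$, assuming for now that all internal signals lie in $\mathcal{L}_2^n$. Write $\Delta y_1=H_1\Delta u_1$ with $\Delta u_1=\Delta w+\Delta y_2$ and $\Delta y_2=\mu H_2\Delta y_1$ in the incremental sense.

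By Parseval, $\|\Delta y_1\|^2$ and the cross terms are integrals over $\omega$ of quadratic forms in $\Delta\hat u_1(j\omega)$. For each $\omega$, the pair $(\Delta\hat u_1,H_1(j\omega)\Delta\hat u_1)$ has gain and phase lying in $\textup{SRG}(H_1(j\omega))$. The separation $r$ from $\mu\mathcal{C}(M,N)$, whose boundary is made of circle and line arcs, should give a uniform lower bound of the form $\|\Delta w\|\ge c(r)\|\Delta y_1\|$, with $c$ depending only on $r$ and not on $\mu$. This mirrors how \cite{Chaffey23} obtains $\|T\|_\Delta\le 1/r$.

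\textbf{Step 3.} I close the argument with the homotopy. For $\mu$ small, $\mu H_2$ has small incremental gain: Step~1 with a disk multiplier bounds $\textup{SRG}(\mu H_2)$ inside a disk of radius proportional to $\mu$. Since $H_1$ is bounded, the small-gain theorem gives boundedness, so the $\mathcal{L}_2$ assumption of Step~2 holds. Using well-posedness and continuity in $\mu$, as in \cite[Lemma~2]{Megretski}, boundedness then propagates in steps of fixed size $\delta$ (depending only on $c(r)$ and $\|H_2\|_\Delta$) up to $\mu=1$.

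\textbf{Main obstacle.} The hard step is Step~2. The soft SRG of an LTI system is in general \emph{not} the union of the frequency-wise SRGs; it is a larger, hull-like set \cite{Chen25cdc}. So separating each $\textup{SRG}(H_1(j\omega))$ from $\mu\mathcal{C}(M,N)$ does not directly separate $\textup{SRG}(H_1)$ from it.

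My first attempt will exploit that $\mu\mathcal{C}(M,N)$ is an intersection of generalized disks. For each frequency I pick a single $\Pi_\omega$ from the defining family that certifies the separation, in the sense of a strict frequency-wise quadratic inequality with margin proportional to $r$. I then need a frequency-independent multiplier so that Parseval can be applied.

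If no single $\Pi$ works, I will use the freedom in $\mathbf{\Pi}(M,N)$ and convexity of the feasible set of \eqref{eq:FDI} in $(\Pi,\tau)$. The aim is to reassemble a frequency-dependent combination into the original dynamic IQC, that is, to recover the $N(j\omega)^*MN(j\omega)$ form of the Megretski--Rantzer condition. Stability would then follow from the IQC theorem, with \eqref{eq:dist} serving as the graphical certificate of its strict frequency inequality.
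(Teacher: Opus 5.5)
Your fallback names the right endpoint: recover the $N^*MN$ condition and invoke \cite[Theorem 1]{Megretski}. It does not supply the step that reaches it, and your main route (Steps~1--3) does not go through in general.

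\textbf{Why Step~2 fails.} In Step~1 you pass through Theorem~\ref{th:sSRG1} and replace the dynamic IQC on $\mu H_2$ by the static IQCs $\langle \Delta v,(\Pi\otimes I)\Delta v\rangle\geq 0$, $\Pi\in\mathbf{\Pi}(M,N)$. Each of these is, via Parseval, an integral constraint over \emph{all} frequencies. Condition~\eqref{eq:dist} is frequency-wise and is in general certified by a different $\Pi_\omega$ at each $\omega$. You cannot apply $\Pi_\omega$ to one part of a Parseval integral and $\Pi_{\omega'}$ to another, because none of these static IQCs constrains the nonlinear $H_2$ frequency by frequency. A single $\Pi$ certifying \eqref{eq:dist} at every $\omega$ exists only in the static-multiplier (circle-criterion-like) situation. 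Convexity of \eqref{eq:FDI} in $(\Pi,\tau)$ does not produce one: a convex combination of $\Pi_{\omega_1}$ and $\Pi_{\omega_2}$ can fail at both frequencies. So no estimate $\|\Delta w\|\geq c(r)\|\Delta y_1\|$ follows, and the homotopy in Step~3 has nothing uniform to propagate.

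\textbf{The missing step.} This is the whole of the paper's proof: the frequency dependence of $\Pi_\omega$ is harmless once you transfer the separation \emph{pointwise in $\omega$} to the fixed multiplier $N^*MN$. Put $X_\omega:=\left[\begin{smallmatrix} H_1(j\omega)\\ I\end{smallmatrix}\right]$. The ordering $\left[\begin{smallmatrix} z\\ 1\end{smallmatrix}\right]$ in $\mathcal{C}(M,N)$ is chosen so that, for $z\in\textup{SRG}(H_1(j\omega))$ generated by $u\in\mathbb{C}^n$,
\[
u^*X_\omega^*(\Pi\otimes I)X_\omega u=(u^*u)\left[\begin{smallmatrix} z\\ 1\end{smallmatrix}\right]^*\Pi\left[\begin{smallmatrix} z\\ 1\end{smallmatrix}\right].
\]
Hence the certifying $\Pi_\omega$ gives $X_\omega^*(\Pi_\omega\otimes I)X_\omega\preceq-\epsilon I$. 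Since $\Pi_\omega\in\mathbf{\Pi}(M,N)$, \eqref{eq:FDI} at that same $\omega$ says $\Pi_\omega\otimes I\succeq\tau_\omega N(j\omega)^*MN(j\omega)$. Sandwiching with $X_\omega$ yields
\[
X_\omega^*N(j\omega)^*MN(j\omega)X_\omega\preceq-\tfrac{\epsilon}{\tau_\omega}I .
\]
This is the strict frequency condition of \cite[Theorem 1]{Megretski} for the single multiplier $N^*MN$. The well-posedness and IQC hypotheses on $\mu H_2$, $\mu\in(0,1]$, are that theorem's remaining hypotheses. The homotopy and Parseval arguments you planned are therefore already inside it, and Steps~1--3 are unnecessary.

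\textbf{What still needs care.} The paper passes over uniformity too. A single $\Pi_\omega$ must exclude all of $\textup{SRG}(H_1(j\omega))$ with a margin $\epsilon$ independent of $\omega$; this is automatic only in the SISO case. In addition, $\tau_\omega$ must remain bounded so that $\epsilon/\tau_\omega$ stays bounded away from zero.
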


\begin{proof}
    Under the separation condition \eqref{eq:dist} stated in the theorem, it follows that for each $\omega \in \mathbb{R}$, there must be at least one matrix $\Pi \in \mathbf{\Pi}(M,N)$ such that
    \begin{equation}
        \begin{bmatrix}
            z(j\omega)\\1
        \end{bmatrix}^*\Pi \begin{bmatrix}
            z(j\omega) \\ 1
        \end{bmatrix}\leq  -\epsilon I,
    \end{equation}
    with $z(j\omega) \in \textup{SRG}(H_1(j\omega))$ and $\epsilon >0$ a uniform constant that depends on $r$. Exploiting the fact that $z(j\omega)=\rho(u,H(j\omega)u)e^{\pm j\theta(u,H(j\omega)u)}$, it then follows from a similar result as in Lemma~\ref{col:sSRG1} that 
    \begin{equation}
        \begin{bmatrix}
            H_1(j\omega)\\I
        \end{bmatrix}^*\left(\Pi \otimes I\right)\begin{bmatrix}
            H_1(j\omega) \\ I
        \end{bmatrix}\preceq  -\epsilon I.
    \end{equation}
    Since $\Pi \in \mathbf{\Pi}(M,N)$ it follows that
    \begin{equation}
        x^*(\Pi \otimes I - \tau N(j\omega)^*MN(j\omega))x \geq 0
    \end{equation}
    for all $x \in \mathbb{C}^{2n}$, and some $\tau > 0$. Taking $x=[H_1(-j\omega), I]^*u$ with $u \in \mathbb{C}^n$, we find
    \begin{equation}\label{eq:IQC}
        \begin{bmatrix}
            H_1(j\omega) \\ I
        \end{bmatrix}^* N(j\omega)^*MN(j\omega)\begin{bmatrix}
            H_1(j\omega) \\I
        \end{bmatrix}\leq -\tfrac{\epsilon}{\tau} I
    \end{equation}
    for all $\omega \in \mathbb{R}$, so that all conditions of the IQC theorem \cite[Theorem 1]{Megretski} are satisfied, and the system is stable. 
\end{proof}

Theorem~\ref{th:fb} provides a connection between SRGs and the IQC theorem (Theorem 1 in \cite{Megretski}). In particular, Condition~\eqref{eq:dist} in Theorem~\ref{th:fb} is an SRG separation condition that admits a graphical verification, similar as the conditions in \cite[Theorem 2]{Chen}. At the same time, the proof of Theorem~\ref{th:fb} relies fundamentally on the IQC theorem (see \eqref{eq:IQC}). In fact, Theorem~\ref{th:fb} provides a new graphical interpretation of IQC-based stability conditions, in which stability can be assessed \emph{without explicitly invoking multipliers in the graphical test itself.} For example, in the SISO setting, condition \eqref{eq:dist} reduces to a ``forbidden'' region in the complex plane that the Nyquist curve of $H_1$ must avoid. This mirrors the classical geometric interpretations arising from static IQC tests, such as the small-gain theorem and the circle-criterion, whereas dynamic IQCs typically lead to conditions expressed in terms of the modified Nyquist contour of $NH_1$. 

The connection between SRG and IQC stability conditions becomes exact when restricting to a single static multiplier. In this case, Theorem~\ref{th:fb} is equivalent to the IQC theorem of \cite{Megretski}. For instance, $\Pi=\mathrm{diag}(\gamma^2,-1)$ with $\gamma >0$ recovers the small-gain theorem, whereas $\Pi=\left[\begin{smallmatrix}
    0 &k\\
    k & -2
\end{smallmatrix}\right]$ with $k>0$ yields the circle-criterion. When dynamic multipliers are considered, Theorem~\ref{th:fb} may not always go beyond the static case. This will be illustrated next through Popov multipliers.

\subsection{Popov multipliers}
The Popov criterion \cite{Jonsson} applies to memoryless, sector-bounded nonlinearities that satisfy a static IQC with 
\begin{equation}\label{eq:Ps}
    M = \begin{bmatrix}
        0 & k \\
        k & -2
    \end{bmatrix}, \:\:k>0, \quad \textup{and} \quad N=I,
\end{equation}
and a soft dynamic IQC (satisfied with equality) with 
\begin{equation}\label{eq:Pd}
    M = \begin{bmatrix}
        0 & 1\\
        1 & 0 
    \end{bmatrix}, \quad \textup{and} \quad N(s) = \begin{bmatrix}
        \tfrac{s}{1+\sigma s} & 0 \\
        0 & 1
    \end{bmatrix},\: \sigma \geq 0.
\end{equation}
The linear combination of the multipliers in \eqref{eq:Ps} and \eqref{eq:Pd} is also known as a Popov multiplier. To compute an overbound on S(R)Gs associated with Popov multipliers, we invoke Theorem~\ref{th:sSRG1}. Focusing on the SISO case ($n=1)$, the frequency-domain inequality in \eqref{eq:FDI} reduces to
\begin{equation}\label{eq:Pfdi}
    \Pi - \tau_1 \begin{bmatrix}
        0 & k \\
        k & -2
    \end{bmatrix}- \tau_2\begin{bmatrix}
        0 & \tfrac{j\omega}{1+\sigma j\omega}\\
        \tfrac{-j\omega}{1-\sigma j\omega} & 0
    \end{bmatrix}\succeq 0,
\end{equation}
with $\tau_1\geq 0$ and $\tau_2 \in \mathbb{R}$. Since \eqref{eq:Pfdi} must hold for all $\omega \in \mathbb{R}$, in particular $\omega = 0$, it follows that $\Pi$ must admit the form
\begin{equation}
    \Pi = \tau_1 \begin{bmatrix}
        0 & k \\
        k & -2
    \end{bmatrix} + Q,\quad \textup{with}\quad Q\succeq 0.
\end{equation}
The smallest admissible overbounding region $\mathcal{S}(\Pi)$ is therefore obtained by setting $Q=0$, which coincides exactly with the region induced by the static IQC alone. Consequently, including the dynamic IQC does not tighten the resulting S(R)G overbound beyond the static case. 

This outcome may appear a limitation of the proposed framework. However, it reflects a structural limitation rather than a technical one. Indeed, it follows from \cite[Proposition 12]{Chaffey23} that the exact SRG of a broad class of static, sector-bounded nonlinearities is precisely the disc resulting from the static multiplier, and, therefore, dynamic multipliers will not reduce this region. The key implication is that the Popov criterion cannot be recovered from Theorem~\ref{th:fb} when working directly with the SRGs of the original feedback components. Of course, the Popov result is recovered after a loop transformation that shifts the multiplier from $H_2$ to $H_1$, similar to what is done in standard passivity results.

As a final note, we emphasize that incorporating dynamic IQCs  for SRG overbounds can still yield improvements over the static case, which was illustrated by the example in Section~\ref{sec:ex}. The improvements essentially arise from further restricting the class of nonlinearities, for example to monotone operators, for which Zames–Falb multipliers can yield tighter SRG characterizations.

\section{Conclusions}\label{sec:conclusions}
This paper presents new tools for computing overbounds on soft and hard SRGs of nonlinear systems that satisfy dynamic IQCs. The IQCs serve as simpler characterizations of the system, replacing exact descriptions to enable tractable computational methods. We use the IQCs in an S-procedure manner to develop frequency-domain and LMI-based methods for constructing SRG overbounds. These overbounds are exploited in a feedback stability theorem for general Lur'e systems, which offers a new graphical interpretation of classical IQC-based stability conditions that does not directly involve multipliers in the graphical test itself. 
\bibliographystyle{IEEEtran}

\end{document}